\newtheorem{thm}{Theorem}
\newtheorem{lem}[thm]{Lemma}
\newtheorem{prop}[thm]{Proposition}
\newtheorem{cor}[thm]{Corollary}
\theoremstyle{definition}
\newtheorem{dfn}[thm]{Definition}
\newtheorem{rem}[thm]{Remark}
\def\rank{\mathop{\mathrm{rank}}\nolimits}
\def\Hom{\mathop{\mathrm{Hom}}\nolimits}
\def\Ad{\mathop{\mathrm{Ad}}\nolimits}
\def\ad{\mathop{\mathrm{ad}}\nolimits}
\def\spn{\mathop{\mathrm{span}}\nolimits}
\def\SL{\mathop{\mathrm{SL}}\nolimits}
\newcommand{\mf}[1]{{\mathfrak{#1}}}
\newcommand{\bb}[1]{{\mathbb{#1}}}
\newcommand{\mca}[1]{{\mathcal{#1}}}
\newcommand{\df}{{d_{\mathcal{F}}}}
\newcommand{\GN}{{\Gamma\backslash N}}
\newcommand{\gn}{{\bar{\Gamma}\backslash\bar{N}}}
\newcommand{\er}{{e^{\bb{R}X_1}}}
\newcommand{\pz}{{\bar{\pi}^{-1}(z)}}
\title{Parameter rigid actions of simply connected nilpotent Lie groups}
\author{Hirokazu Maruhashi\thanks{maruhashihirokazu@gmail.com}}
\affil{Department of Mathematics, Kyoto University}
\date{\empty}
\begin{document}
\maketitle

\begin{abstract}
We show that for a locally free action of a simply connected nilpotent Lie group on a compact manifold, if every real valued cocycle is cohomologous to a constant cocycle, then the action is parameter rigid. The converse is true if the action has a dense orbit. Using this, we construct parameter rigid actions of simply connected nilpotent Lie groups whose Lie algebras admit rational structures with graduations. This generalizes the results of dos Santos \cite{dS} concerning the Heisenberg groups. 
\end{abstract}

\tableofcontents

\section{Introduction}
Let $G$ be a connected Lie group with Lie algebra $\mf{g}$ and $M$ a $C^\infty$-manifold without boundary. Let $\rho\colon M\times G\to M$ be a $C^\infty$ right action. We say $\rho$ is {\em locally free} if every isotropy subgroup of $\rho$ is discrete in $G$. For a locally free action $\rho$, we have the orbit foliation $\mca{F}$ of $\rho$, whose tangent bundle $T\mca{F}$ is naturally isomorphic to the trivial bundle $M\times\mf{g}$. 

A locally free action $\rho$ is {\em parameter rigid} if any $C^\infty$ right action $\rho^\prime$ of $G$ on $M$ with the same orbit foliation $\mca{F}$ is $C^\infty$-conjugate to $\rho$, more precisely, there exist an automorphism $\Phi$ of the Lie group $G$ and a $C^\infty$-diffeomorphism $F$ of $M$ which preserves each leaf of $\mca{F}$ and is $C^0$-homotopic to the identity map of $M$ through $C^\infty$-maps preserving each leaf of $\mca{F}$ such that 
\begin{equation*}
F\left(\rho(x,g)\right)=\rho^\prime\left(F(x),\Phi(g)\right)
\end{equation*}
holds for all $x\in M$ and $g\in G$. 

Parameter rigidity has been studied by several authors, for instance, Katok and Spatzier \cite{KS}, Matsumoto and Mitsumatsu \cite{MM}, Mieczkowski \cite{Mi}, dos Santos \cite{dS} and Ram\'{i}rez \cite{R}. Most of known examples of parameter rigid actions are those of abelian groups and actions of nonabelian Lie groups have not been considered so much. 

Parameter rigidity is closely related to cocycles over actions. Let $H$ be a Lie group. A $C^\infty$-map $c\colon M\times G\to H$ is called an {\em $H$-valued cocycle over $\rho$} if $c$ satisfies 
\begin{equation*}
c\left(x,gg^\prime\right)=c(x,g)c\left(\rho(x,g),g^\prime\right)
\end{equation*}
for all $x\in M$ and $g$, $g^\prime\in G$. 

A cocycle $c$ is {\em constant} if $c(x,g)$ is independent of $x$. A constant cocycle is just a homomorphism $G\to H$. 

Two $H$-valued cocycles $c$, $c^\prime$ are {\em cohomologous} if there exists a $C^\infty$-map $P\colon M\to H$ such that 
\begin{equation*}
c(x,g)=P(x)^{-1}c^\prime(x,g)P\left(\rho(x,g)\right)
\end{equation*}
holds for all $x\in M$ and $g\in G$. 

An action $\rho$ of $G$ on $M$ is {\em $H$-valued cocycle rigid} if every $H$-valued cocycle over $\rho$ is cohomologous to a constant cocycle. 

\begin{prop}[Matsumoto--Mitsumatsu \cite{MM}]\label{cp}
If a $C^\infty$ locally free action $\rho$ of a contractible Lie group $G$ on a closed $C^\infty$-manifold $M$ is $G$-valued cocycle rigid, then it is parameter rigid. 
\end{prop}

\begin{rem}
In \cite{MM} Matsumoto and Mitsumatsu assume that $\rho$ has at least one trivial isotropy subgroup, but this assumption is not necessary. 
\end{rem}

\begin{prop}[Matsumoto--Mitsumatsu \cite{MM}]
For a $C^\infty$ locally free action $\rho$ of $\bb{R}^n$ on a closed $C^\infty$-manifold $M$, the following are equivalent: 
\begin{enumerate}
\item The action $\rho$ is $\bb{R}$-valued cocycle rigid. 
\item The action $\rho$ is $\bb{R}^n$-valued cocycle rigid. 
\item The action $\rho$ is parameter rigid. 
\end{enumerate}
\end{prop}

\begin{rem}
The equivalence of the first two conditions is obvious. 
\end{rem}

In this paper we consider actions of simply connected nilpotent Lie groups. In \cite{dS}, dos Santos proved that for an action of a Heisenberg group $H_n$, the $\bb{R}$-valued cocycle rigidity implies the $H_n$-valued cocycle rigidity. Using this, he constructed parameter rigid actions of Heisenberg groups. To the best of my knowledge these are the only known ``nontrivial'' parameter rigid actions of nonabelian nilpotent Lie groups. We prove the following. 

\begin{thm}\label{main}
Let $N$ be a simply connected nilpotent Lie group, $M$ a closed $C^\infty$ manifold and $\rho$ a $C^\infty$ locally free action of $N$ on $M$. Then, 
\begin{enumerate}
\item the action $\rho$ is $\bb{R}$-valued cocycle rigid if and only if it is $N$-valued cocycle rigid. 
\item If $\rho$ is parameter rigid and has a dense orbit, then it is $\bb{R}$-valued cocycle rigid. 
\end{enumerate}
\end{thm}

Using this theorem we can construct parameter rigid actions of nilpotent Lie groups. The most interesting one is the following. 

\begin{thm}[Ram\'{i}rez \cite{R}]
Let $N\subset\SL(n,\bb{R})$ denote the group of all upper triangular matrices with ones on the diagonal, $\Gamma$ a cocompact lattice of ${\rm SL}(n,\bb{R})$ and $\rho$ the action of $N$ on $\Gamma\backslash{\rm SL}(n,\bb{R})$ by right multiplication. If $n\geq 4$, the action $\rho$ is $\bb{R}$-valued cocycle rigid. 
\end{thm}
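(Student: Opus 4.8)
The plan is to prove the stronger statement that the reduced leafwise first cohomology of $\rho$ vanishes, and to read off $\bb{R}$-valued cocycle rigidity from it. Writing $G={\rm SL}(n,\bb{R})$ and $M=\Gamma\backslash G$, I would first pass from cocycles to infinitesimal data: differentiating an $\bb{R}$-valued cocycle $c$ along one-parameter subgroups, $\beta_X(x)=\frac{d}{dt}\big|_{t=0}c(x,\exp tX)$, produces a linear map $\beta\colon\mf{n}\to C^{\infty}(M)$, and the additive cocycle identity becomes exactly the condition that $\beta$ is a $1$-cocycle in the Chevalley--Eilenberg complex of $\mf{n}$ with coefficients in the $\mf{n}$-module $C^{\infty}(M)$, where $\mf{n}$ acts by the orbit-generating vector fields. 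Under the trivialization $T\mca{F}\cong M\times\mf{n}$ this is the leafwise de Rham complex, so cocycles correspond to $Z^1(\mf{n};C^{\infty}(M))$, cohomologous cocycles to cohomologous cochains, and constant cocycles to the image of $H^1(\mf{n};\bb{R})$ under $\bb{R}\hookrightarrow C^{\infty}(M)$. Since $N$ is unimodular and preserves the $G$-invariant probability measure, the module splits as $C^{\infty}(M)=\bb{R}\oplus C^{\infty}(M)_0$, so that $\bb{R}$-valued cocycle rigidity is equivalent to $H^1(\mf{n};C^{\infty}(M)_0)=0$.

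Next I would decompose. Because $\Gamma$ is cocompact, $L^2_0(M)$ is a Hilbert direct sum $\bigoplus_j\mca{H}_j$ of nontrivial irreducible unitary $G$-representations with finite multiplicities, and $C^{\infty}(M)_0$ is the corresponding sum of smooth-vector spaces. The strategy is to solve the coboundary equation representation by representation: for each $j$ show $H^1(\mf{n};\mca{H}_j^{\infty})=0$, i.e.\ that every $\mf{n}$-cocycle valued in $\mca{H}_j^{\infty}$ is the coboundary of a smooth vector, and then reassemble these primitives into one element of $C^{\infty}(M)_0$. The per-representation vanishing is algebraic; I would attack it with the Hochschild--Serre spectral sequence for the ideal $[\mf{n},\mf{n}]\triangleleft\mf{n}$, so that the first cohomology is built, through the $E_2$-page, from the abelian cohomology of $\mf{n}/[\mf{n},\mf{n}]=\bigoplus_{i=1}^{n-1}\bb{R}X_i$ (the simple root directions $X_i=E_{i,i+1}$) with coefficients in $H^0([\mf{n},\mf{n}];\mca{H}_j^{\infty})$, together with the $\mf{n}/[\mf{n},\mf{n}]$-invariants of $H^1([\mf{n},\mf{n}];\mca{H}_j^{\infty})$.

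The heart of the matter is the abelian step, and this is where $n\geq4$ enters. For $n\geq4$ there exist non-adjacent simple roots, e.g.\ $X_1$ and $X_3$, which genuinely commute in $\mf{n}$ and hence generate an $\bb{R}^2$-subaction; I would solve the cohomological equation for this higher-rank abelian subsystem using the decay of matrix coefficients of $\mca{H}_j$ and a Katok--Spatzier-type tame-estimate scheme, then propagate the resulting primitive across all of $\mf{n}$ via the bracket relations, using that the simple roots generate $\mf{n}$. For $n=2$ the single horocycle direction carries the Flaminio--Forni invariant distributions that obstruct solvability, and for $n=3$ (the Heisenberg unipotent) no commuting pair of simple roots is available, so the higher-rank mechanism fails; thus $n\geq4$ is not merely technical.

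The main obstacle I anticipate is analytic rather than algebraic: the passage from per-representation $L^2$-primitives to a genuinely smooth global primitive. One must establish Sobolev estimates for the solution operator that are \emph{uniform} across the unitary dual, with losses of derivatives controlled polynomially in the Casimir eigenvalue, so that the reassembled series converges in $C^{\infty}(M)_0$; the delicate case is that of ``small'' representations whose matrix coefficients decay slowly and for which these estimates threaten to degenerate. Controlling these uniformly, and checking that the $n\geq4$ decay rate is strong enough to defeat the low-rank obstructions, is where the real work lies. Once $H^1(\mf{n};C^{\infty}(M)_0)=0$ is established, $\bb{R}$-valued cocycle rigidity follows, and via Theorem~\ref{main} so does parameter rigidity of $\rho$.
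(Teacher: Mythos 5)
First, a remark on what ``the paper's own proof'' is here: the paper does \emph{not} prove this theorem. It is quoted from Ram\'\i rez \cite{R} (see the remark immediately after it), and is used only as input to Theorem \ref{main} to deduce the Corollary. So the only meaningful comparison is with Ram\'\i rez's actual argument, whose broad strategy --- decompose $L^2_0(\Gamma\backslash G)$ into irreducible unitary representations, trivialize cocycles over higher-rank abelian subactions generated by commuting root subgroups (this is where $n\geq 4$ enters), then propagate to the full group $N$ with Sobolev estimates uniform over the unitary dual --- your outline does reproduce in spirit, including the correct reduction of cocycle rigidity to $H^1(\mf{n};C^{\infty}(M)_0)=0$.

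As a proof, however, your proposal has genuine gaps. The assertion that ``the per-representation vanishing is algebraic'' is wrong: $H^1(\mf{n};\mca{H}_j^{\infty})$ for an infinite-dimensional Fr\'echet module does not vanish for formal reasons, and establishing the needed vanishing with quantitative control is precisely the analytic content of \cite{R}, \cite{Mi} and the Flaminio--Forni circle of ideas; your outline defers exactly this. More structurally, the Hochschild--Serre scaffolding does not mesh with your intended attack. By Howe--Moore, a nontrivial irreducible unitary representation of ${\rm SL}(n,\bb{R})$ has no nonzero vectors fixed by the noncompact connected group $[N,N]$, so $H^0([\mf{n},\mf{n}];\mca{H}_j^{\infty})=0$, and the $E_2$ term you identify as ``the heart of the matter'' --- cohomology of $\mf{n}/[\mf{n},\mf{n}]$ with coefficients in these invariants --- vanishes trivially. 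The five-term exact sequence then pushes the entire difficulty into the other corner, $H^1([\mf{n},\mf{n}];\mca{H}_j^{\infty})^{\mf{n}}$, i.e.\ into the derived subalgebra, where your commuting pair $E_{12},E_{34}$ (which does not lie in $[\mf{n},\mf{n}]$) gives no leverage. What is actually needed, and what \cite{R} does, is to restrict the cocycle directly to abelian subalgebras spanned by commuting root vectors, trivialize there via the higher-rank machinery, and then carry out a genuine propagation argument to all of $\mf{n}$; this propagation step, together with the uniform tame estimates required to reassemble smooth primitives across infinitely many representations, is named in your proposal but not carried out, and it constitutes essentially the whole of the work.
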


\begin{rem}
In \cite{R}, Ram\'{i}rez proved more general theorems. 
\end{rem}

\begin{cor}
The above action $\rho$ is parameter rigid for $n\geq4$. 
\end{cor}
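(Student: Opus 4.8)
The plan is to apply Theorem~\ref{main} directly, so the only genuine work is to check that its hypotheses are met in the present situation. The group $N$ of upper triangular unipotent matrices is connected, simply connected and nilpotent. Since $\Gamma$ is a cocompact lattice, the quotient $M=\Gamma\backslash{\rm SL}(n,\bb{R})$ is a compact manifold. To see that $\rho$ is locally free, I would compute the isotropy subgroup at a point $\Gamma g$: the condition $\Gamma gh=\Gamma g$ means $gh=\gamma g$ for some $\gamma\in\Gamma$, i.e.\ $ghg^{-1}\in\Gamma$, so that the isotropy subgroup is $N\cap g^{-1}\Gamma g$. This is discrete because $\Gamma$ is discrete in ${\rm SL}(n,\bb{R})$. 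Hence $\rho$ is a locally free $C^{\infty}$-action of a connected and simply connected nilpotent Lie group on a compact manifold, exactly as required by Theorem~\ref{main}.

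With the hypotheses verified, I would combine the two available inputs. By the theorem of Ram\'\i rez cited just above, the action $\rho$ is $\bb{R}$-valued cocycle rigid whenever $n\geq4$; this is precisely condition~(\ref{m1}) of Theorem~\ref{main}. Since that theorem asserts the equivalence of condition~(\ref{m1}) with condition~(\ref{m2}), and condition~(\ref{m2}) includes the statement that $\rho$ is parameter rigid, we conclude at once that $\rho$ is parameter rigid. (As a bonus, condition~(\ref{m2}) also yields that every orbitwise constant real valued $C^{\infty}$-function is constant on $M$, although this stronger conclusion is not needed for the corollary.)

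The point to emphasize is that there is essentially no analytic obstacle at this stage. All of the hard work is packaged elsewhere: the representation-theoretic and harmonic-analytic difficulty of establishing $\bb{R}$-valued cocycle rigidity for $n\geq4$ lives in Ram\'\i rez's theorem, while the nontrivial implication $(\ref{m1})\Rightarrow(\ref{m2})$, which upgrades scalar cocycle rigidity all the way to parameter rigidity, is supplied by the proof of Theorem~\ref{main}. The corollary is thus the immediate payoff of these two results once the elementary structural facts above---connectedness and nilpotency of $N$, compactness of $M$, and local freeness of $\rho$---have been checked.
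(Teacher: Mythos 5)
Your proof is correct and is exactly the argument the paper intends: the corollary is stated as an immediate consequence of Ram\'\i rez's theorem (giving condition~(1) of Theorem~\ref{main}) together with the implication to parameter rigidity in Theorem~\ref{main}, and your verification of the hypotheses (connected simply connected nilpotent $N$, compact $M$ from cocompactness of $\Gamma$, local freeness via discreteness of $N\cap g^{-1}\Gamma g$) fills in precisely the routine checks the paper leaves implicit.
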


In Section \ref{const} we construct parameter rigid actions of nilpotent Lie groups using Theorem \ref{main}. It is a generalization of dos Santos' example. Let $N$ be a simply connected nilpotent Lie group and $\Gamma$, $\Lambda$ be lattices in $N$. Consider the action of $\Lambda$ on $\GN$ by right multiplication. Let $\tilde{\rho}$ be its suspended action of $N$ on $\Gamma\backslash N\times_\Lambda N$. 

\begin{thm}\label{main2}
If $\Lambda$ is Diophantine with respect to $\Gamma$, then the action $\tilde{\rho}$ of $N$ is parameter rigid. 
\end{thm}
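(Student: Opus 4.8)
The plan is to prove that $\tilde\rho$ is $\bb{R}$-valued cocycle rigid and then invoke Theorem \ref{main}: since condition (\ref{m1}) implies condition (\ref{m2}) there, $\bb{R}$-valued cocycle rigidity yields parameter rigidity at once. (Note that $M$ fibers over the compact base $\Lambda\backslash N$ with compact fiber $\GN$, so $M$ is compact and Theorem \ref{main} applies.) Thus I would reduce the whole statement to showing that every $\bb{R}$-valued cocycle over $\tilde\rho$ is cohomologous to a homomorphism $N\to\bb{R}$.

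First I would exploit the suspension structure. Writing $M=(\GN\times N)/\Lambda$, where $\Lambda$ acts by $\lambda\cdot(\Gamma a,n)=(\Gamma a\lambda,\lambda^{-1}n)$ and $N$ acts by right translation on the second factor, a cocycle $c$ over $\tilde\rho$ lifts to a $\Lambda$-invariant cocycle $\tilde c$ over the free translation action of $N$ on the cover $\GN\times N$. On the cover every such cocycle is a coboundary: setting $B(\Gamma a,n):=\tilde c((\Gamma a,e),n)$, the cocycle identity gives $\tilde c((\Gamma a,n),g)=B(\Gamma a,ng)-B(\Gamma a,n)$. Using $\Lambda$-invariance of $\tilde c$, a direct computation shows that $\psi_\lambda(\Gamma a):=B(\Gamma a\lambda,\lambda^{-1}n)-B(\Gamma a,n)$ is independent of $n$, so $\{\psi_\lambda\}_{\lambda\in\Lambda}$ is a $1$-cocycle of $\Lambda$ with values in $C^{\infty}(\GN)$ for the translation action $(\lambda\cdot f)(\Gamma a)=f(\Gamma a\lambda)$. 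Tracking the freedom of modifying $c$ by a coboundary of a function on $M$ (i.e. a $\Lambda$-invariant function on the cover) shows that $c$ is cohomologous to a constant cocycle $c_0$ exactly when $\psi_\lambda=\lambda\cdot u-u-c_0(\lambda)$ for some $u\in C^{\infty}(\GN)$; equivalently, the class $[\psi]$ must lie in the image of $\Hom(\Lambda,\bb{R})=H^1(\Lambda;\bb{R})$ inside $H^1(\Lambda;C^{\infty}(\GN))$.

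This reduces matters to a cohomological equation for the translation action of $\Lambda$ on the nilmanifold $\GN$, which I would solve by harmonic analysis. Decompose $L^2(\GN)$ into $N$-irreducible components via Kirillov theory and solve $\psi_\lambda=\pi(\lambda)u-u$ in each component. The trivial representation, corresponding to constants, forces the $\GN$-average of $\psi_\lambda$ to be a function of $\lambda$ alone, which the cocycle relation $\psi_{\lambda\mu}=\psi_\lambda+\lambda\cdot\psi_\mu$ makes into a homomorphism; this supplies the $\Hom(\Lambda,\bb{R})$-part $c_0$. On every nontrivial component $\pi(\lambda)-I$ is invertible, producing a formal $L^2$ solution $u$. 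The Diophantine hypothesis on $\Lambda$ relative to $\Gamma$ is precisely what bounds $\|(\pi(\lambda)-I)^{-1}\|$ polynomially as $\pi$ ranges over the discrete set of representations occurring in $\GN$, so that the Sobolev norms of $u$ are controlled by those of $\psi$ and $u$ is genuinely $C^{\infty}$.

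The main obstacle I expect is exactly this small-divisor control: converting the formal, component-wise solution into a smooth function requires the Diophantine estimates to dominate the growth of the spectral data of $\GN$, uniformly across the part of the unitary dual that appears. A secondary difficulty is that $\Lambda$ is not cyclic but a finitely generated nilpotent group, so invertibility of $\pi(\lambda)-I$ for a single $\lambda$ does not by itself solve the equation for all of $\Lambda$; here I would use the cocycle relations together with the lower central series filtration of $\Lambda$ to reduce to a generating set and to match the solutions consistently across generators. Once a smooth $u$ and the homomorphism $c_0$ are obtained, the modified potential becomes $\Lambda$-invariant and descends to the required function $P$ on $M$, exhibiting $c$ as cohomologous to the constant cocycle $c_0$ and completing the proof.
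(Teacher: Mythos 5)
Your outer reductions coincide with the paper's: you reduce parameter rigidity of $\tilde\rho$ to $\bb{R}$-valued cocycle rigidity via Theorem \ref{main}, and you re-derive by hand the identification $H^1(\mca{F})\simeq H^1(\Lambda,C^{\infty}(\GN))$ that the paper quotes from Pereira--dos Santos, together with the observation that constant cocycles correspond to $\Hom(\Lambda,\bb{R})$ (the paper packages this as the lemma $\dim\Hom(\Lambda,\bb{R})=\dim H^1(\mf{n})$; your version implicitly uses that homomorphisms $\Lambda\to\bb{R}$ extend to $N$, which is fine). So everything hinges on proving the paper's Theorem \ref{constant}: every smooth $\bb{R}$-valued cocycle over the $\Lambda$-action on $\GN$ is cohomologous to a constant one.

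It is in your plan for this core step that there is a genuine gap. You propose to decompose $L^2(\GN)$ into $N$-irreducibles and invert $\pi(\lambda)-I$ on each nontrivial component, with the Diophantine hypothesis giving polynomial bounds on $\lVert(\pi(\lambda)-I)^{-1}\rVert$. When $N$ is nonabelian this claim is false, and not for the ``secondary'' reason you flag (non-cyclic $\Lambda$): $L^2(\GN)$ contains infinite-dimensional irreducible summands, and in such a summand the unitary $\pi(\lambda)$ for a generic $\lambda$ has a continuum of spectrum containing $1$ --- in the Schr\"odinger model of a Heisenberg representation, $\pi(\lambda)$ is a translation operator on $L^2(\bb{R})$, whose spectrum is the whole circle --- so $(\pi(\lambda)-I)^{-1}$ is unbounded and no Diophantine condition on the lattice can produce the asserted bound. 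What rescues a representation-theoretic argument is that the \emph{center} of $N$ acts by scalars in each irreducible: for the central element $e^{Y_1}$ (where $Y_1=aX_1$ with $a$ Diophantine) the operator $\pi(e^{Y_1})-I$ is a scalar of the form $e^{2\pi i\theta}-1$ whose inverse the Diophantine hypothesis does control; one must then propagate the solution from the central direction to all of $\Lambda$ and induct. The paper carries out exactly this center-first induction, but geometrically rather than spectrally: it quotients by the central flow $\er$ to get a principal circle bundle $\bar\pi:\GN\to\gn$, solves the cohomological equation for $e^{\bb{Z}Y_1}$ fiberwise by classical Fourier series against the Diophantine rotation number $a$ (with a separate lemma establishing smoothness transverse to the fibers), checks via ergodicity and the fiberwise integrals that the corrected cocycle descends to a cocycle over $\bar\Lambda$ acting on $\gn$, and applies induction on $\dim N$, the quotient lattices remaining Diophantine. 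Replacing your blanket invertibility claim by this center-first scheme is not a technical patch but the actual content of the theorem.
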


For the definition of a Diophantine lattice, see Section \ref{const}. 

\section{Preliminaries}
Let $G$ be a contractible Lie group with Lie algebra $\mf{g}$, $M$ a closed $C^\infty$ manifold and $\rho$ a $C^\infty$ locally free action of $G$ on $M$ with orbit foliation $\mca{F}$. Let $H$ be a Lie group with Lie algebra $\mf{h}$. Let $\Omega^p(\mca{F};\mf{h})$ denote the set of all $C^\infty$-sections of $\Hom\left(\bigwedge^pT\mca{F},\mf{h}\right)$. The exterior derivative 
\begin{equation*}
\df\colon\Omega^p(\mca{F};\mf{h})\to\Omega^{p+1}(\mca{F};\mf{h})
\end{equation*}
is defined since $T\mca{F}$ is integrable. 

By differentiating, $H$-valued cocycles over $\rho$ are in one-to-one correspondence with $\mf{h}$-valued leafwise $1$-forms $\omega\in\Omega^1(\mca{F};\mf{h})$ such that 
\begin{equation*}
\df\omega+[\omega,\omega]=0. 
\end{equation*}

\begin{prop}\label{mat}
Let $c_1$, $c_2$ be $H$-valued cocycles over $\rho$ and let $\omega_1$, $\omega_2$ be the corresponding differential forms. For a $C^\infty$-map $P\colon M\to H$, the following are equivalent: 
\begin{enumerate}
\item $c_1(x,g)=P(x)^{-1}c_2(x,g)P(xg)$ for all $x\in M$ and $g\in G$. 
\item $\omega_1=\Ad\left(P^{-1}\right)\omega_2+P^*\theta$, where $\theta\in\Omega^1(H;\mf{h})$ is the left Maurer--Cartan form of $H$. 
\end{enumerate}
\end{prop}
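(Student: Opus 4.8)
The plan is to prove $(1)\Rightarrow(2)$ by differentiating the cocycle identity at $g=e$, and then to get $(2)\Rightarrow(1)$ essentially for free from the fact, recalled just above the statement, that $c\mapsto\omega$ is a \emph{bijection} between $H$-valued cocycles over $\rho$ and flat leafwise forms $\omega$ with $\df\omega+[\omega,\omega]=0$. Recall that the form attached to a cocycle $c$ is determined by $\omega_x(X^M_x)=\frac{d}{dt}\big|_{t=0}c(x,\exp tX)$, where $X^M_x=\frac{d}{dt}\big|_{t=0}\rho(x,\exp tX)$ is the leafwise tangent vector generated by $X\in\mf{g}$; this makes sense because $c(x,e)=e$, so the derivative lies in $T_eH=\mf{h}$.

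For $(1)\Rightarrow(2)$ I would fix $x$ and $X$, substitute $g=\exp tX$ into the identity of $(1)$, and differentiate at $t=0$. Both sides are curves in $H$ passing through $e$ at $t=0$, so I apply the left Maurer–Cartan form $\theta$, i.e. take left-logarithmic derivatives. The left-hand side yields $\omega_{1,x}(X^M_x)$. For the right-hand side I write it as the product of $\alpha(t)=P(x)^{-1}c_2(x,\exp tX)$ and $\beta(t)=P(\rho(x,\exp tX))$ and use the product rule $\theta\big((\alpha\beta)'\big)=\Ad(\beta^{-1})\theta(\alpha')+\theta(\beta')$, evaluated at $t=0$. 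Since $c_2(x,e)=e$ and $\rho(x,e)=x$, so that $\beta(0)=P(x)$, the first factor contributes $\Ad(P(x)^{-1})\omega_{2,x}(X^M_x)$, while the chain rule $\frac{d}{dt}\big|_{t=0}P(\rho(x,\exp tX))=dP_x(X^M_x)$ identifies the second contribution as $(P^*\theta)_x(X^M_x)$. Collecting terms gives exactly $(2)$ evaluated on $X^M_x$, and since $x$ and $X$ are arbitrary this is the claimed identity of leafwise forms.

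For $(2)\Rightarrow(1)$ I would set $\tilde c(x,g)=P(x)^{-1}c_2(x,g)P(\rho(x,g))$; a short gauge computation using the cocycle identity for $c_2$ shows $\tilde c$ is again an $H$-valued cocycle over $\rho$. By the forward direction just proved, the form associated to $\tilde c$ is $\Ad(P^{-1})\omega_2+P^*\theta$, which equals $\omega_1$ by hypothesis $(2)$. Because the assignment cocycle $\mapsto$ form is injective, $\tilde c=c_1$, which is precisely $(1)$. The only delicate point in the whole argument is the Maurer–Cartan bookkeeping in the forward direction: one must differentiate a product correctly in the possibly nonabelian group $H$, keep track of which factor is evaluated at $e$, and recognize the second term as the pullback $P^*\theta$ rather than conflating left and right translations. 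Everything after that is formal once the one-to-one correspondence is invoked.
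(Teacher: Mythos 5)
Your argument is correct, but there is nothing in the paper to compare it against line by line: the paper does not prove Proposition~\ref{mat} at all, remarking only that it ``is obtained by examining the proof of Corollary~\ref{comat} in \cite{MM}''. Your write-up therefore supplies the computation the paper delegates to the literature. The forward direction is the expected one: substitute $g=\exp tX$, apply the left Maurer--Cartan form to both sides, and use the product rule $\theta\bigl((\alpha\beta)'\bigr)=\Ad(\beta^{-1})\theta(\alpha')+\theta(\beta')$; your bookkeeping is right, in particular that left invariance of $\theta$ makes the constant factor $P(x)^{-1}$ in $\alpha$ harmless, that $\Ad(\beta(0)^{-1})=\Ad(P(x)^{-1})$, and that $\theta(\beta'(0))=(P^*\theta)_x(X^M_x)$ by the chain rule; since $X\mapsto X^M_x$ exhausts $T_x\mca{F}$ (local freeness), the pointwise identity upgrades to an identity of leafwise forms. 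Your converse is a pleasant economy: instead of integrating the form identity along leaves --- which is the route one would extract from \cite{MM} --- you note that $\tilde c(x,g)=P(x)^{-1}c_2(x,g)P(\rho(x,g))$ is again a smooth cocycle (a two-line check with the cocycle identity for $c_2$), apply the already-proved direction to $\tilde c$, and conclude $\tilde c=c_1$ from injectivity of the cocycle-to-form correspondence. Invoking that injectivity is legitimate: the paper asserts the one-to-one correspondence in its standing setting ($G$ contractible, and connectedness of $G$ already gives injectivity, since a cocycle is recovered from its form by solving an ODE in $g$ with initial condition $c(x,e)=e$). So the proof is complete; the only thing worth flagging is that its second half leans on the bijectivity statement the paper records without proof, which is consistent with how the paper itself treats this proposition.
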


\begin{cor}[Matsumoto--Mitsumatsu \cite{MM}]\label{comat}
The following are equivalent: 
\begin{enumerate}
\item The action $\rho$ is $G$-valued cocycle rigid. 
\item For each $\omega\in\Omega^1(\mca{F};\mf{g})$ such that $\df\omega+[\omega,\omega]=0$, there exist an endomorphism $\Phi\colon\mf{g}\to\mf{g}$ of Lie algebra and a $C^\infty$-map $P\colon M\to G$ such that 
\begin{equation*}
\omega=\Ad\left(P^{-1}\right)\Phi+P^*\theta. 
\end{equation*}
\end{enumerate}
\end{cor}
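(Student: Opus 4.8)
The plan is to read off the corollary from Proposition \ref{mat} together with the dictionary between cocycles and leafwise one-forms recalled just above it. Recall that $G$-valued cocycles over $\rho$ correspond bijectively to forms $\omega\in\Omega^1(\mca{F},\mf{g})$ satisfying $\df\omega+[\omega,\omega]=0$, and that $G$-valued cocycle rigidity means that every such cocycle is cohomologous to a constant one. So the task is to identify, under this dictionary, which forms correspond to constant cocycles, and then to translate the cohomology relation through Proposition \ref{mat}.

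First I would determine the form attached to a constant cocycle. A constant cocycle is a homomorphism $\phi:G\to G$, and since $G$ is contractible, hence simply connected, these are in bijection with Lie algebra endomorphisms $\Phi:\mf{g}\to\mf{g}$ via $\Phi=d\phi$. Using the cocycle identity $c(x,e)=e$ and the identification $T\mca{F}\cong M\times\mf{g}$, differentiating $\phi$ in the group direction produces the section of $\Hom(T\mca{F},\mf{g})$ that equals $\Phi$ at every point, i.e. the constant form $\Phi$. I would then verify that a constant form $\Phi$ satisfies the structure equation $\df\Phi+[\Phi,\Phi]=0$ exactly when $\Phi$ is a Lie algebra endomorphism: evaluating on the leafwise fields generated by $X,Y\in\mf{g}$, the term $\df\Phi$ contributes $-\Phi([X,Y])$ because $\Phi$ is constant, while $[\Phi,\Phi]$ contributes $[\Phi(X),\Phi(Y)]$, so the equation reduces to $\Phi([X,Y])=[\Phi(X),\Phi(Y)]$. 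Thus the endomorphisms appearing in statement (2) are precisely the forms of constant cocycles.

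With this identification the equivalence follows from Proposition \ref{mat}. Apply that proposition with $c_2$ a constant cocycle, whose form is $\omega_2=\Phi$, and $c_1$ an arbitrary cocycle, whose form is $\omega_1=\omega$: the relation $c_1(x,g)=P(x)^{-1}c_2(x,g)P(\rho(x,g))$ holds for some $P$ if and only if $\omega=\Ad(P^{-1})\Phi+P^*\theta$. Hence every cocycle is cohomologous to a constant one exactly when every solution $\omega$ of $\df\omega+[\omega,\omega]=0$ admits such a decomposition, which is the stated equivalence of (1) and (2). The argument is essentially bookkeeping once Proposition \ref{mat} is granted; the only steps needing care are the reduction of the structure equation for a constant form to the homomorphism condition and the appeal to simple connectivity of $G$ to pass between Lie algebra and group endomorphisms, so I do not expect a genuine obstacle beyond these routine verifications.
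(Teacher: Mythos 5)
Your proof is correct, but it does not follow the paper's route, because the paper has no internal proof of this corollary at all: it is quoted directly from \cite{MM}, and the paper explicitly says that Proposition \ref{mat} is what one obtains \emph{by examining the proof of this corollary} in \cite{MM}. So the paper's logical order is the reverse of yours: there, the corollary comes first (as a citation) and the proposition is extracted from its proof, whereas you take the proposition as the primitive statement and derive the corollary from it. Your direction is legitimate and non-circular, since Proposition \ref{mat} is a statement about an arbitrary pair of cocycles and a map $P$, proved by direct differentiation of the relation $c_1(x,g)=P(x)^{-1}c_2(x,g)P(\rho(x,g))$, with no rigidity input. The two verifications you isolate are exactly the bookkeeping that makes the specialization work: under the bijection between cocycles and solutions of $\df\omega+[\omega,\omega]=0$, a constant cocycle $\phi:G\to G$ corresponds to the constant form $d\phi_e$, and a constant form $\Phi$ solves the structure equation precisely when $-\Phi([X,Y])+[\Phi(X),\Phi(Y)]=0$ for all $X,Y\in\mf{g}$ (using the paper's convention $[\omega,\omega](X,Y)=[\omega(X),\omega(Y)]$, the one forced by the Maurer--Cartan equation as written there), i.e.\ when $\Phi$ is a Lie algebra endomorphism; and contractibility of $G$ gives the integration of such a $\Phi$ back to a group endomorphism, which is what the implication from (2) to (1) needs. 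What your route buys is a proof that is self-contained within the paper modulo Proposition \ref{mat}; what the paper's arrangement buys is that the proposition, which is the tool actually used later (e.g.\ in equation \eqref{star}), is justified by the reference rather than resting on the corollary.
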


Proposition \ref{mat} is obtained by examining the proof of Corollary \ref{comat} in \cite{MM}. In this paper, we will identify a cocycle with its corresponding differential form. 

Let us consider real valued cocycles. A real valued cocycle over $\rho$ is given by $\omega\in\Omega^1(\mca{F};\bb{R})$ satisfying $\df\omega=0$. Two real valued cocycles $\omega_1$, $\omega_2$ are cohomologous if and only if $\omega_1=\omega_2+\df P$ for some $C^\infty$-function $P\colon M\to\bb{R}$. The leafwise cohomology $H^*(\mca{F})$ of $\mca{F}$ is the cohomology of the cochain complex $\left(\Omega^*(\mca{F};\bb{R}),\df\right)$. Thus $H^1(\mca{F})$ is the set of all equivalence classes of real valued cocycles. 

The identification $T\mca{F}\simeq M\times\mf{g}$ induces a map $H^*(\mf{g})\to H^*(\mca{F})$, where $H^*(\mf{g})$ is the cohomology of the Lie algebra $\mf{g}$. By the compactness of $M$, this map is injective on $H^1(\mf{g})$. Hence we identify $H^1(\mf{g})$ with its image. Note that $H^1(\mf{g})$ is the set of all equivalence classes of constant real valued cocycles. Thus real valued cocycle rigidity is equivalent to $H^1(\mca{F})=H^1(\mf{g})$. 

\section{Proof of Theorem \ref{main}}
Let $N$ be a simply connected nilpotent Lie group with Lie algebra $\mf{n}$, $M$ a closed $C^\infty$ manifold and $\rho$ a $C^\infty$ locally free action of $N$ on $M$ with orbit foliation $\mca{F}$. 

We first prove that $N$-valued cocycle rigidity implies real valued cocycle rigidity. There exist closed subgroups $N^\prime$ and $A$ of $N$ such that 
\begin{equation*}
N^\prime\triangleleft N,\quad N=N^\prime\rtimes A\ \ \text{and}\ \ A\simeq\bb{R}. 
\end{equation*}
Let $c$ be any real valued cocycle over $\rho$. We regard $c$ as an $N$-valued cocycle over $\rho$ via the inclusion $\bb{R}\simeq A\hookrightarrow N$. By the $N$-valued cocycle rigidity, there exist an endomorphism $\Phi$ of $N$ and a $C^\infty$-map $P\colon M\to N$ such that 
\begin{equation*}
c(x,g)=P(x)^{-1}\Phi(g)P\left(\rho(x,g)\right)
\end{equation*}
for all $x\in M$ and $g\in N$. Applying the natural projection $\pi\colon N\to A\simeq\bb{R}$, we obtain 
\begin{equation*}
c(x,g)=(\pi\circ P)(x)^{-1}(\pi\circ\Phi)(g)(\pi\circ P)\left(\rho(x,g)\right). 
\end{equation*}
Thus $c$ is cohomologous to a constant cocycle $\pi\circ\Phi$. 

Next we assume $H^1(\mca{F})=H^1(\mf{g})$ and prove the $N$-valued cocycle rigidity. We need the following two lemmas. 

\begin{lem}\label{lem1}
Let $V$ be a finite dimensional real vector space. Assume that $\omega\in \Omega^1(\mca{F};V)$ satisfies the equation $\df \omega=\varphi$, where $\varphi\in \Hom\left(\bigwedge^2\mf{n},V\right)$ is a constant leafwise $2$-form. Then there exists a constant leafwise $1$-form $\psi\in\Hom(\mf{n},V)$ with $\varphi=\df \psi$. 
\end{lem}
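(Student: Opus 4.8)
The plan is to produce $\psi$ by averaging $\omega$ over $M$ against an invariant measure. Since $N$ is a connected simply connected nilpotent Lie group it is amenable, so its action $\rho$ on the compact manifold $M$ carries an $N$-invariant Borel probability measure $\mu$: apply the fixed-point characterization of amenability to the (affine, jointly continuous) action of $N$ on the weak-$*$ compact convex set of probability measures on $M$. Using the trivialization $T\mca{F}\simeq M\times\mf{n}$, I regard a leafwise form $\alpha\in\Omega^p(\mca{F},V)$ as a smooth map $x\mapsto\alpha_x\in\Hom(\bigwedge^p\mf{n},V)$, and define its average $\bar\alpha\in\Hom(\bigwedge^p\mf{n},V)$ by $\bar\alpha=\int_M\alpha_x\,d\mu(x)$, a constant leafwise form.

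The heart of the argument is the identity $\overline{\df\alpha}=\df\bar\alpha$, i.e. that averaging intertwines the leafwise differential with the Chevalley--Eilenberg differential of $\mf{n}$ (on constant forms $\df$ is just the latter). I check this for $p=1$, which is all that is needed. For $X\in\mf{n}$ let $\widehat X$ denote the leafwise vector field generating $t\mapsto\rho(\,\cdot\,,\exp tX)$. In the trivialization the leafwise differential reads
\[
(\df\omega)_x(X,Y)=\bigl(\widehat X(\omega(Y))\bigr)(x)-\bigl(\widehat Y(\omega(X))\bigr)(x)-\omega_x([X,Y]).
\]
Integrating over $M$ and using that $\mu$ is invariant under the flow of $\widehat X$, one has $\int_M\widehat X(g)\,d\mu=0$ for every $C^\infty$-function $g$: differentiate $\int_M g\circ\rho(\,\cdot\,,\exp tX)\,d\mu=\int_M g\,d\mu$ at $t=0$, which is legitimate by smoothness on the compact manifold $M$. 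Hence the first two terms vanish and
\[
\overline{\df\omega}(X,Y)=-\bar\omega([X,Y])=(\df\bar\omega)(X,Y).
\]

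Applying this to $\varphi=\df\omega$ finishes the proof: $\varphi$ is constant, so $\bar\varphi=\varphi$, whence $\varphi=\overline{\df\omega}=\df\bar\omega$ and I may take $\psi=\bar\omega\in\Hom(\mf{n},V)$. The only genuine difficulty is securing the invariant measure together with the vanishing $\int_M\widehat X(g)\,d\mu=0$; everything else is the bookkeeping of the coordinate formula for $\df$. I expect the existence of $\mu$ via amenability of $N$ to be the step a reader will want spelled out, since it is precisely this compactness-plus-amenability input that upgrades a merely leafwise primitive $\omega$ to a constant Lie-algebra primitive $\psi$. (The same computation shows $\bar{\,\cdot\,}$ is a chain map in all degrees, so the lemma is the degree-one instance of the injectivity of $H^2(\mf{n},V)\to H^2(\mca{F},V)$ on the class of $\varphi$.)
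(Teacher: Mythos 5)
Your proof is correct and is essentially the paper's own argument: the paper likewise invokes amenability of $N$ to obtain an invariant Borel probability measure $\mu$ on $M$, defines $\psi(X)=\int_M\omega(X)\,d\mu$, and uses invariance of $\mu$ to annihilate the derivative terms in $\varphi(X,Y)=X\omega(Y)-Y\omega(X)-\omega([X,Y])$, leaving $\df\psi(X,Y)=-\int_M\omega([X,Y])\,d\mu=\varphi(X,Y)$. The details you spell out beyond this (the fixed-point characterization of amenability, differentiation under the integral sign, and the remark that averaging is a chain map) are elaborations of steps the paper leaves implicit.
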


\begin{proof}
Since $N$ is nilpotent, there exists an $N$-invariant Borel probability measure $\mu$ on $M$. Define $\psi\in\Hom(\mf{n},V)$ by 
\begin{equation*}
\psi(X)=\int_M\omega(X)d\mu
\end{equation*}
for $X\in\mf{n}$. Since $\varphi(X,Y)=X\omega(Y)-Y\omega(X)-\omega\left([X,Y]\right)$ for all $X$, $Y\in\mf{n}$, we obtain 
\begin{equation*}
\varphi(X,Y)=-\int_M\omega\left([X,Y]\right)d\mu. 
\end{equation*}
Thus 
\begin{equation*}
\df\psi(X,Y)=-\psi\left([X,Y]\right)=-\int_M\omega\left([X,Y]\right)d\mu=\varphi(X,Y), 
\end{equation*}
hence $\df\psi=\varphi$. 
\end{proof}

Set $\mf{n}^1=\mf{n}$, $\mf{n}^i=\left[\mf{n},\mf{n}^{i-1}\right]$. Then $\mf{n}^s\neq 0$, $\mf{n}^{s+1}=0$ for some $s$. For each $1\leq i\leq s$, choose a subspace $V_i$ with $\mf{n}^i=V_i\oplus \mf{n}^{i+1}$, so that $\mf{n}=\bigoplus_{i=1}^s V_i$. 
 
\begin{lem}\label{induction}
Let $\omega\in\Omega^1(\mca{F};\mf{n})$ be such that $\df \omega+[\omega,\omega]=0$. Decompose $\omega$ as 
\begin{equation*}
\omega=\xi+\omega_k+\omega_{k+1}, 
\end{equation*}
where $\xi\in\Omega^1\left(\mca{F};\bigoplus_{i=1}^{k-1}V_i\right)$, $\omega_k\in\Omega^1\left(\mca{F};V_k\right)$ and $\omega_{k+1}\in\Omega^1\left(\mca{F};\mf{n}^{k+1}\right)$. If $\xi$ is constant, then there exists $\omega^\prime\in\Omega^1(\mca{F};\mf{n})$ with $\df\omega^\prime+\left[\omega^\prime,\omega^\prime\right]=0$ which is cohomologous to $\omega$ and such that 
\begin{equation*}
\omega^\prime=\xi^\prime+\omega_{k+1}^\prime, 
\end{equation*}
where $\xi^\prime\in\Omega^1\left(\mca{F};\bigoplus_{i=1}^kV_i\right)$ is constant and $\omega_{k+1}^\prime\in\Omega\left(\mca{F};\mf{n}^{k+1}\right)$. 
\end{lem}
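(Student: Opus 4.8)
The plan is to project the Maurer-Cartan equation onto the top layer $V_k\cong\mf{n}^k/\mf{n}^{k+1}$, solve the resulting linear problem using Lemma \ref{lem1} together with the cohomology hypothesis $H^1(\mca{F})=H^1(\mf{n})$, and then remove the remaining coboundary by a gauge transformation that disturbs only the higher layer $\mf{n}^{k+1}$.

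First I would let $p_k:\mf{n}\to V_k$ denote the projection associated to the decomposition $\mf{n}=\bigoplus_i V_i$ and apply $p_k$ to $\df\omega+[\omega,\omega]=0$. Since $p_k$ is linear it commutes with $\df$, giving $\df\omega_k+p_k[\omega,\omega]=0$. The key computation, which is pure degree counting via the filtration property $[\mf{n}^i,\mf{n}^j]\subseteq\mf{n}^{i+j}$ of the lower central series, is that every term of $[\omega,\omega]$ except $[\xi,\xi]$ takes values in $\mf{n}^{k+1}$: indeed $[\xi,\omega_k]$, $[\omega_k,\omega_k]$, and every bracket involving $\omega_{k+1}$ all land in $\mf{n}^{k+1}$, so $p_k[\omega,\omega]=p_k[\xi,\xi]$. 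Because $\xi$ is constant, $\varphi:=-p_k[\xi,\xi]\in\Hom(\bigwedge^2\mf{n},V_k)$ is a constant leafwise two form and $\df\omega_k=\varphi$.

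Next I would invoke Lemma \ref{lem1} with $V=V_k$ to produce a constant $\psi\in\Hom(\mf{n},V_k)$ with $\df\psi=\varphi$, so that $\omega_k-\psi$ is a closed $V_k$-valued leafwise one form. Tensoring the hypothesis $H^1(\mca{F})=H^1(\mf{n})$ with the finite dimensional space $V_k$ gives $H^1(\mca{F},V_k)=H^1(\mf{n},V_k)$, hence $\omega_k-\psi=\eta+\df Q$ for some constant closed $\eta\in\Hom(\mf{n},V_k)$ and some $C^{\infty}$-map $Q:M\to V_k$. Writing the constant part as $\psi+\eta$, we obtain $\omega_k=(\psi+\eta)+\df Q$.

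Finally I would kill the coboundary $\df Q$ by the gauge transformation $\omega\mapsto\omega'=\Ad(P^{-1})\omega+P^*\theta$ with $P=\exp(-Q)$, where $Q:M\to V_k\subseteq\mf{n}$; by Proposition \ref{mat} this $\omega'$ is a cocycle cohomologous to $\omega$, so it again satisfies $\df\omega'+[\omega',\omega']=0$. The second degree-counting argument, again the crux, is that because $Q$ takes values in $V_k$ the leading term of $P^*\theta$ is $-\df Q\in\Omega^1(\mca{F},V_k)$, while every remaining term of $P^*\theta$ and all of $\Ad(P^{-1})\omega-\omega$ lie in $\mf{n}^{k+1}$ (using $[V_k,V_i]\subseteq\mf{n}^{k+i}$ and $[V_k,\mf{n}^{k+1}]\subseteq\mf{n}^{k+1}$). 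Consequently the $\bigoplus_{i<k}V_i$-component of $\omega'$ is unchanged and equal to $\xi$, the $V_k$-component becomes $\omega_k-\df Q=\psi+\eta$, which is constant, and everything else is absorbed into a new $\omega_{k+1}'\in\Omega^1(\mca{F},\mf{n}^{k+1})$. Setting $\xi'=\xi+\psi+\eta$ then yields the desired form $\omega'=\xi'+\omega_{k+1}'$. The main obstacle is the two degree-counting verifications; both reduce to the inclusion $[\mf{n}^i,\mf{n}^j]\subseteq\mf{n}^{i+j}$ and are otherwise routine.
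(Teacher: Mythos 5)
Your proof is correct and follows essentially the same route as the paper: project the Maurer--Cartan equation onto $V_k$ to see $\df\omega_k$ is constant, apply Lemma \ref{lem1} and the hypothesis $H^1(\mca{F})=H^1(\mf{n})$ to write $\omega_k$ as a constant form plus $\df Q$, then gauge by the exponential of the $V_k$-valued primitive, with the filtration inclusion $[\mf{n}^i,\mf{n}^j]\subseteq\mf{n}^{i+j}$ confining all error terms to $\mf{n}^{k+1}$. The only (immaterial) difference is that you transform $\omega$ forward via $P=\exp(-Q)$, whereas the paper takes $P=e^{h}$ and exhibits $\omega$ itself as the transform $\Ad(P^{-1})\omega'+P^*\theta$ of the new constant-plus-$\mf{n}^{k+1}$ form $\omega'$.
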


\begin{proof}
By cocycle equation, 
\begin{equation*}
0=\df\xi+\df\omega_k+\df\omega_{k+1}+[\xi,\xi]+\text{an element of $\Omega^2\left(\mca{F};\mf{n}^{k+1}\right)$. }
\end{equation*}
Comparing the $V_k$-components of both sides of the equation, we see that $\df\omega_k$ is constant. Hence by Lemma \ref{lem1}, $\df\omega_k=\df\psi$ for some $\psi\in\Hom(\mf{n},V_k)$. Since we are assuming that $H^1(\mca{F})=H^1(\mf{n})$, there exist $\psi^\prime\in\Hom(\mf{n},V_k)$ and a $C^\infty$-map $h\colon M\to V_k$ such that 
\begin{equation*}
\omega_k=\psi+\psi^\prime+\df h. 
\end{equation*}
Put $P=e^h\colon M\to N$. Let $x\in M$ and $X\in T_x\mca{F}$. Choose a path $x(t)$ such that $X=\left.\frac{d}{dt}x(t)\right|_{t=0}$. Let $\theta\in\Omega^1(N;\mf{n})$ be the left Maurer--Cartan form of $N$. Then 
\begin{align*}
P^*\theta(X)&=\left.\frac{d}{dt}P(x)^{-1}P(x(t))\right|_{t=0}=\left.\frac{d}{dt}e^{-h(x)}e^{h(x(t))}\right|_{t=0}\\
&=\left.\frac{d}{dt}\exp\left(-h(x)+h(x(t))+\text{an element of $\mf{n}^{k+1}$}\right)\right|_{t=0}\\
&=\df h(X)+\text{an element of $\mf{n}^{k+1}$. }
\end{align*}
Thus $P^*\theta=\df h+\text{an element of $\Omega^1\left(\mca{F};\mf{n}^{k+1}\right)$}$. Note that $\Ad\left(P^{-1}\right)=\exp\ad(-h)$ is the identity on $\bigoplus_{i=1}^kV_i$ and preserves $\mf{n}^{k+1}$. Hence 
\begin{align*}
\omega-P^*\theta&=\xi+\psi+\psi^\prime+\text{an element of $\Omega^1\left(\mca{F};\mf{n}^{k+1}\right)$}\\
&=\Ad\left(P^{-1}\right)\left(\xi+\psi+\psi^\prime+\text{an element of $\Omega^1\left(\mca{F};\mf{n}^{k+1}\right)$}\right). 
\end{align*}
\end{proof}

Let $\omega$ be any $N$-valued cocycle. Using Lemma \ref{induction}, we can exchange $\omega$ for a cohomologous cocycle whose $V_1$-component is constant. Applying Lemma \ref{induction} repeatedly, we eventually get a constant cocycle cohomologous to $\omega$. This proves the $N$-valued cocycle rigidity. 

Next we assume that $\rho$ is parameter rigid and has a dense orbit. Let $\mf{n}^i$ and $V_i$ be as above. Note that $\mf{n}^s$ is central in $\mf{n}$. Fix a nonzero element $Z\in\mf{n}^s$. 

Let $[\omega]\in H^1(\mca{F})$. Let $\omega_0$ be the $N$-valued cocycle over $\rho$ corresponding to the constant cocycle $\mathrm{id}\colon N\to N$. We call $\omega_0$ {\em the canonical $1$-form of $\rho$}. Fix an $\epsilon>0$ and put $\eta:=\omega_0+\epsilon\omega Z$. Then $\eta$ is an $N$-valued cocycle over $\rho$ since 
\begin{equation*}
\df\eta+[\eta,\eta]=\df\omega_0+\epsilon(\df\omega)Z+\left[\omega_0,\omega_0\right]=0. 
\end{equation*}
Since $M$ is compact, we can assume $\eta_x\colon T_x\mca{F}\to\mf{n}$ is bijective for all $x\in M$ by choosing $\epsilon>0$ small. There exists a unique action $\rho^\prime$ of $N$ on $M$ whose orbit foliation is $\mca{F}$ and whose canonical $1$-form is $\eta$. See Asaoka \cite{A}. By the parameter rigidity, $\rho^\prime$ is conjugate to $\rho$. Thus there exist a $C^\infty$-map $P\colon M\to N$ and an automorphism $\Phi$ of $N$ satisfying 
\begin{equation}\label{star}
\omega_0+\epsilon\omega Z=\Ad\left(P^{-1}\right)\Phi_*\omega_0+P^*\theta.
\end{equation}
Note that $\log\colon N\to\mf{n}$ is defined since $N$ is simply connected and nilpotent. Let us decompose 
\begin{equation*}
\omega_0=\sum_{i=1}^s\omega_{0i},\quad \Phi_*\omega_0=\sum_{i=1}^s\omega_{0i}^\prime\ \ \text{and}\ \ \log P=\sum_{i=1}^sP_i
\end{equation*}
according to the decomposition $\mf{n}=\bigoplus_{i=1}^sV_i$. 

\begin{lem}\label{pp}
Assume that $P_1=\dotsm=P_{k-1}=0$, ie $\log P\in\mf{n}^k$. 
\begin{enumerate}
\item If $k<s$, then there exist a $C^\infty$-map $Q\colon M\to N$ and an automorphism $\Psi$ of $N$ such that 
\begin{equation*}
\omega_0+\epsilon\omega Z=\Ad\left(Q^{-1}\right)\Psi_*\omega_0+Q^*\theta
\end{equation*}
and $Q_1=\dotsm=Q_k=0$, where $\log Q=\sum_{i=1}^sQ_i$. 
\item If $k=s$, then $\omega$ is cohomologous to a constant cocycle. 
\end{enumerate}
\end{lem}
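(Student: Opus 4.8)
The plan is to read off equation \eqref{star} degree by degree with respect to the decomposition $\mf{n}=\bigoplus_{i=1}^sV_i$, using the hypothesis $\log P\in\mf{n}^k$ to bound the orders of the two nonlinear terms on the right. Write $h:=\log P\in\mf{n}^k$. Since $\ad(\mf{n}^k)$ raises the lower-central-series degree by $k$, the map $\Ad(P^{-1})=\exp\ad(-h)$ is the identity modulo maps sending $\mf{n}^j$ into $\mf{n}^{j+k}$, so that $\Ad(P^{-1})\Phi_*\omega_0=\Phi_*\omega_0+(\text{an element of }\Omega^1(\mca{F},\mf{n}^{k+1}))$. The same Baker--Campbell--Hausdorff computation as in the proof of Lemma \ref{induction} gives $P^*\theta=\df h+(\text{an element of }\Omega^1(\mca{F},\mf{n}^{2k}))$, whose degree-$k$ component is $\df P_k$.

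Next I would compare the $V_k$-components of \eqref{star}. Because $k<s$, the term $\epsilon\omega Z$, which is valued in $\bb{R}Z\subseteq V_s$, contributes nothing in degree $k$; so the left side is $\omega_{0k}$ while the right side is $\omega_{0k}'+\df P_k$, whence $\df P_k=\omega_{0k}-\omega_{0k}'$ is a constant leafwise form. Integrating against an $N$-invariant probability measure $\mu$ (available by amenability, as in Lemma \ref{lem1}) annihilates the exact left side and forces $\omega_{0k}=\omega_{0k}'$, hence $\df P_k=0$. Then $P_k$ is leafwise constant, and the standing hypothesis $H^0(\mca{F})=H^0(\mf{n})$ upgrades it to a genuine constant $c_k\in V_k$ (alternatively one may invoke injectivity of $H^1(\mf{n})\to H^1(\mca{F})$).

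For part 1 I factor out this constant. Set $R=\exp c_k$ and $Q=R^{-1}P$. By Baker--Campbell--Hausdorff, $\log Q=-c_k+h+\tfrac12[-c_k,h]+\dotsm$ lies in $\mf{n}^{k+1}$, since $-c_k+h\in\mf{n}^{k+1}$ and every bracket lies in $\mf{n}^{2k}\subseteq\mf{n}^{k+1}$; hence $Q_1=\dotsm=Q_k=0$. Since $R$ is constant, left multiplication leaves the Maurer--Cartan pullback unchanged, $P^*\theta=Q^*\theta$, while $\Ad(P^{-1})=\Ad(Q^{-1})\Ad(R^{-1})$. Taking the automorphism $\Psi(g)=R^{-1}\Phi(g)R$, so that $\Psi_*=\Ad(R^{-1})\Phi_*$, rewrites \eqref{star} as $\omega_0+\epsilon\omega Z=\Ad(Q^{-1})\Psi_*\omega_0+Q^*\theta$, which is exactly the desired identity.

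For part 2, where $k=s$, the element $h\in\mf{n}^s$ is central, so $\Ad(P^{-1})=\mathrm{id}$ and $P^*\theta=\df h$ with no error terms, and \eqref{star} collapses to $\omega_0+\epsilon\omega Z=\Phi_*\omega_0+\df h$. Its $V_s$-component reads $\epsilon\omega Z=(\omega_{0s}'-\omega_{0s})+\df h$; choosing a functional $\lambda\colon V_s\to\bb{R}$ with $\lambda(Z)=1$ and applying it gives $\epsilon\omega=\lambda(\omega_{0s}'-\omega_{0s})+\df(\lambda\circ h)$, so $\omega$ is cohomologous to $\epsilon^{-1}\lambda(\omega_{0s}'-\omega_{0s})$, which is closed (being $\epsilon\omega-\df(\lambda\circ h)$) and hence a constant cocycle. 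I expect the main obstacle to be the degree-$k$ step: one must verify carefully that all corrections from $\Ad(P^{-1})$ and from the expansion of $P^*\theta$ genuinely fall in degree $\geq k+1$, and then combine the exactness of $\df P_k$ with its constancy to conclude $P_k$ is constant — precisely the point where both amenability and $H^0(\mca{F})=H^0(\mf{n})$ are needed. Once $P_k$ is known to be constant, the factorization in part 1 and the functional-theoretic extraction in part 2 are routine.
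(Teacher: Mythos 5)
Your argument is correct and follows the paper's proof almost step for step: the same BCH expansions of $P^*\theta$ and $\Ad(P^{-1})\Phi_*\omega_0$, the same comparison of $V_k$-components of \eqref{star}, the same use of $H^0(\mca{F})=H^0(\mf{n})$ to pass from leafwise constancy of $P_k$ to constancy on $M$, and the same factorization $Q=\exp(-P_k)P$ with $\Psi_*=\Ad(\exp(-P_k))\Phi_*$ in part 1; part 2 also matches, your functional $\lambda$ merely making explicit the identification $\bb{R}Z\simeq\bb{R}$. The one place you genuinely diverge is the proof that the constant form $\df P_k=\omega_{0k}-\omega_{0k}'$ vanishes: you integrate against an $N$-invariant probability measure (amenability, as in Lemma \ref{lem1}), whereas the paper writes $\df P_k=\phi\circ\omega_0$ for a linear $\phi$ and integrates along an integral curve of $\tilde{X}$, so that boundedness of $P_k$ on the compact $M$ forces $\phi(X)=0$. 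Both are valid and interchangeable here; the paper's version uses only compactness, while yours recycles the invariant measure already produced for Lemma \ref{lem1}.

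One caution: your parenthetical ``alternatively one may invoke injectivity of $H^1(\mf{n})\to H^1(\mca{F})$'' is a legitimate substitute only for the step $\df P_k=0$ (an exact constant form is automatically closed, and injectivity then kills its class); it cannot replace the hypothesis $H^0(\mca{F})=H^0(\mf{n})$ in the following step, since a leafwise constant function need not be constant on $M$ without that assumption. As written, its placement suggests the latter reading, which would be a gap --- but since your main line of argument invokes the $H^0$ hypothesis correctly, the proof stands.
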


\begin{proof}
For all $X=\left.\frac{d}{dt}x(t)\right|_{t=0}\in T_x\mca{F}$, 
\begin{align*}
P^*\theta(X)&=\left.\frac{d}{dt}P(x)^{-1}P\left(x(t)\right)\right|_{t=0}=\left.\frac{d}{dt}\exp\left(-\sum_{i=k}^sP_i(x)\right)\exp\left(\sum_{i=k}^sP_i\left(x(t)\right)\right)\right|_{t=0}\\
&=\left.\frac{d}{dt}\exp\left\{\sum_{i=k}^s\left(P_i\left(x(t)\right)-P_i(x)\right)+\text{an element of $\mf{n}^{k+1}$}\right\}\right|_{t=0}\\
&=\left.\frac{d}{dt}\exp\left(P_k\left(x(t)\right)-P_k(x)+\text{an element of $\mf{n}^{k+1}$}\right)\right|_{t=0}\\
&=\df P_k(X)+\text{an element of $\mf{n}^{k+1}$. }
\end{align*}
We have 
\begin{align*}
\Ad\left(P^{-1}\right)\Phi_*\omega_0&=\exp\left(\ad\left(-\sum_{i=k}^sP_i\right)\right)\sum_{i=1}^s\omega_{0i}^\prime\\
&=\sum_{i=1}^s\omega_{0i}^\prime+\text{an element of $\mf{n}^{k+1}$. }
\end{align*}
Comparing the $V_k$-components of both sides of Equation \eqref{star} we get 
\begin{equation*}
\omega_{0k}+\delta_{ks}\epsilon\omega Z=\omega_{0k}^\prime+\df P_k. 
\end{equation*}
When $k=s$ the equation 
\begin{equation*}
\omega Z=\epsilon^{-1}\left(\omega_{0s}^\prime-\omega_{0s}\right)+\df\left(\epsilon^{-1}P_s\right)
\end{equation*}
shows that $\omega$ is cohomologous to a constant cocycle. 

If $k<s$, then $\df P_k=\phi\circ\omega_0$ for some linear map $\phi\colon\mf{n}\to V_k$. For any $X\in\mf{n}$, let $\tilde{X}$ denote the vector field on $M$ determined by $X$ via $\rho$. We have $\tilde{X}P_k=\phi(X)$ and by integrating over an integral curve $\gamma$ of $\tilde{X}$ we get 
\begin{equation*}
P_k\left(\gamma(T)\right)-P_k\left(\gamma(0)\right)=\phi(X)T
\end{equation*}
for all $T>0$. Since $M$ is compact, $\phi(X)=0$. Therefore $\df P_k=0$, so that $P_k$ is constant on each leaf of $\mca{F}$. Thus $P_k$ is constant on $M$ by our assumption. Put $g:=\exp(-P_k)$ and 
\begin{equation*}
Q:=gP=\exp\left(\sum_{i=k+1}^sP_i+\text{an element of $\mf{n}^{k+1}$}\right). 
\end{equation*}
Then 
\begin{align*}
\omega_0+\epsilon\omega Z&=\Ad\left(Q^{-1}g\right)\Phi_*\omega_0+\left(L_{g^{-1}}\circ Q\right)^*\theta\\
&=\Ad\left(Q^{-1}\right)\Psi_*\omega_0+Q^*\theta, 
\end{align*}
where $\Psi_*:=\Ad(g)\Phi_*$. 
\end{proof}

Applying Lemma \ref{pp} repeatedly, we see that $\omega$ is cohomologous to a constant cocycle. This completes the proof of Theorem \ref{main}. 

\section{Construction of parameter rigid actions}\label{const}
Let us now construct real valued cocycle rigid actions of nilpotent groups. For the structure theory of nilpotent Lie groups, see Corwin--Greenleaf \cite{CG}. 

Let $N$ be a simply connected nilpotent Lie group with Lie algebra $\mf{n}$. A basis $X_1,\dots,X_n$ of $\mf{n}$ is called a {\em strong Malcev basis} if $\spn_{\bb{R}}\left\{X_1,\dots,X_i\right\}$ is an ideal of $\mf{n}$ for each $i$. If $\Gamma$ is a lattice in $N$, there exists a strong Malcev basis $X_1,\dots,X_n$ of $\mf{n}$ such that $\Gamma=e^{\bb{Z}X_1}\cdots e^{\bb{Z}X_n}$. Such a basis is called a {\em strong Malcev basis strongly based on $\Gamma$}. 

\begin{dfn}
Let $\Gamma$ be a lattice in $N$. A lattice $\Lambda$ in $N$ is {\em Diophantine with respect to $\Gamma$} if there exists a strong Malcev basis $X_1,\dots,X_n$ of $\mf{n}$ strongly based on $\Gamma$ and a strong Malcev basis $Y_1,\dots,Y_n$ of $\mf{n}$ strongly based on $\Lambda$ such that $Y_i=\sum_{j=1}^ia_{ij}X_j$ for every $1\leq i\leq n$, where $a_{ii}$ is Diophantine. 
\end{dfn}

For lattices $\Gamma$ and $\Lambda$ of $N$, let $\rho$ be the action of $\Lambda$ on $\GN$ by right multiplication. First we will prove the following. 

\begin{thm}\label{constant}
If $\Lambda$ is Diophantine with respect to $\Gamma$, then every real valued $C^\infty$ cocycle $c\colon\GN\times\Lambda\to\bb{R}$ over $\rho$ is cohomologous to a constant cocycle. 
\end{thm}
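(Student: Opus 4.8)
The plan is to induct on $n=\dim\mf{n}$, peeling off the central circle direction $\er$ one step at a time. Since $\spn_{\bb{R}}\{X_1\}$ is a one-dimensional ideal of the nilpotent algebra $\mf{n}$, the vector $X_1$ is central, so $\er$ is a central one-parameter subgroup and $\bar\pi:\GN\to\gn$ is a circle bundle whose fibers $\pz$ are the images of the central $\er$-orbits. Both strong Malcev bases descend to strong Malcev bases of $\bar{\mf{n}}$ strongly based on $\bar\Gamma$ and $\bar\Lambda$, with the same relation matrix for indices $\geq 2$; in particular $\bar\Lambda$ remains Diophantine with respect to $\bar\Gamma$, so the inductive hypothesis applies to the base. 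The generator $\lambda_1:=e^{Y_1}=e^{a_{11}X_1}$ lies in $\er$, and because $X_1$ is central, right multiplication $R_{\lambda_1}$ preserves every fiber $\pz$ and acts there as rotation by $a_{11}$ in the coordinate for which $\Gamma\cap\er=e^{\bb{Z}X_1}$ has period $1$. The base case $n=1$ is the classical fact that every $C^{\infty}$ real cocycle over a Diophantine circle rotation is cohomologous to a constant.

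First I would straighten out the $\lambda_1$-component of a given cocycle $c$. Let $\kappa_1\in C^{\infty}(\gn)$ be the average of $c(\cdot,\lambda_1)$ over the fibers $\pz$. Then $x\mapsto c(x,\lambda_1)-\kappa_1(\bar\pi(x))$ has zero average on each fiber, so along the Diophantine rotation $R_{\lambda_1}$ I would solve the cohomological equation fiberwise: expand in Fourier modes along the circle, divide the $m$-th mode by $e^{2\pi i m a_{11}}-1$, and resum. This is exactly where the Diophantine hypothesis on $a_{11}$ is indispensable, and it is the main obstacle: the lower bound $|e^{2\pi i m a_{11}}-1|\geq C|m|^{-\tau}$ must be played against the rapid decay of the Fourier coefficients of the smooth function so as to produce a genuine solution $P\in C^{\infty}(\GN)$, with all base derivatives controlled uniformly so that $P$ is jointly smooth. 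After replacing $c$ by the cohomologous cocycle $(x,\lambda)\mapsto c(x,\lambda)-\bigl(P(x\lambda)-P(x)\bigr)$, I may assume $c(x,\lambda_1)=\kappa_1(\bar\pi(x))$ is $\er$-invariant.

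Next I would exploit the centrality of $\lambda_1$. Writing out the cocycle identity for the commuting pair $\lambda_1\lambda_i=\lambda_i\lambda_1$ and using that $c(\cdot,\lambda_1)$ is now $\er$-invariant, one gets $c(x\lambda_1,\lambda_i)-c(x,\lambda_i)=\kappa_1(\bar\pi(x)\bar\lambda_i)-\kappa_1(\bar\pi(x))$; the right-hand side is constant on fibers, while the fiber-average of the left-hand side vanishes, so both sides are zero. This simultaneously shows that $\kappa_1$ is $\bar\Lambda$-invariant and that each $c(\cdot,\lambda_i)$ is $R_{\lambda_1}$-invariant, hence $\er$-invariant since $a_{11}$ is irrational and the rotation has dense orbits. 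Thus $c$ is $\er$-invariant in its space variable and descends through $\bar\pi$ to a cocycle of $\Lambda$ on $\gn$. A parallel central-circle induction shows that every continuous $\bar\Lambda$-invariant function on $\gn$ is constant (its fiber-average is invariant on the base, hence constant by induction, while its nonzero fiber modes are killed by the irrational rotation), so $\kappa_1\equiv c_1$ is constant; moreover $c_1=I(\lambda_1)$ where $I(\lambda):=\int_{\GN}c(\cdot,\lambda)$ against Haar measure defines a homomorphism $I:\Lambda\to\bb{R}$.

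Finally I would subtract the constant cocycle $I$. The remaining cocycle then vanishes on $\langle\lambda_1\rangle=\Lambda\cap\er$ and is $\er$-invariant, so it genuinely descends to a $C^{\infty}$ cocycle of $\bar\Lambda$ over the right action on $\gn$. By the inductive hypothesis this cocycle is cohomologous to a constant via some $\bar P\in C^{\infty}(\gn)$; pulling $\bar P$ back by $\bar\pi$ and combining it with the transfer function $P$ and the homomorphism $I$ exhibits the original $c$ as cohomologous to a constant cocycle, closing the induction. Everything outside the small-divisor estimate of the second paragraph is bookkeeping with the cocycle identity and the fibration, so that estimate is the point where the real work, and the full force of the Diophantine assumption, resides.
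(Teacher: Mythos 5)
Your proposal is correct and takes essentially the same route as the paper: induction on $\dim\mf{n}$ via the central circle bundle $\bar{\pi}:\GN\to\gn$, a fiberwise Fourier/small-divisor solution of the cohomological equation for the Diophantine rotation by $e^{Y_1}$ (the paper's function $h$ and its joint-smoothness lemma), and descent of the straightened cocycle to $\gn$ where the inductive hypothesis applies. The only differences are bookkeeping: the paper pre-normalizes $c$ to have zero $\mu$-average and invokes measure-theoretic ergodicity (its ergodicity lemma and Lemma \ref{int0}), whereas you carry the fiber average $\kappa_1$ along and dispose of it via the commutation identity, density of irrational rotation orbits, and a parallel induction for continuous invariant functions.
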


\begin{proof}
Note that $X_1$ is in the center of $\mf{n}$. Let $\pi\colon N\to\bar{N}:=\er\backslash N$ be the projection. Since $\Gamma\cap\er=e^{\bb{Z}X_1}$ is a cocompact lattice in $\er$, the image $\bar{\Gamma}:=\pi(\Gamma)=\er\backslash\Gamma\er$ is a cocompact lattice in $\bar{N}$. Let $\bar{\mf{n}}=\bb{R}X_1\backslash\mf{n}$, then $\bar{X_2},\dots,\bar{X_n}$ is a strong Malcev basis of $\bar{\mf{n}}$ strongly based on $\bar{\Gamma}$. 

We will see that the naturally induced map $\bar{\pi}\colon\GN\to\gn$ is a principal $S^1$-bundle. Indeed, 
\begin{equation*}
\Gamma\backslash\Gamma\er\hookrightarrow\GN\twoheadrightarrow\Gamma\er\backslash N
\end{equation*}
is a principal $\Gamma\backslash\Gamma\er$-bundle and we have 
\begin{equation*}
\Gamma\backslash\Gamma\er\simeq\Gamma\cap\er\backslash\er=e^{\bb{Z}X_1}\backslash\er\simeq\bb{Z}\backslash\bb{R}
\end{equation*}
and 
\begin{equation*}
\begin{tikzcd}
\er\backslash\Gamma\er\ar[r,hook]&\er\backslash N\ar[r,two heads]\ar[d,two heads]&\Gamma\er\backslash N. \\
&\gn\ar[ru,"\sim",sloped]&
\end{tikzcd}
\end{equation*}
Since $\Lambda\cap\er=\Lambda\cap e^{\bb{R}Y_1}=e^{\bb{Z}Y_1}$ is a cocompact lattice in $e^{\bb{R}X_1}$, the image $\bar{\Lambda}:=\pi(\Lambda)$ is a cocompact lattice in $\bar{N}$. Then $\bar{Y_2},\dots,\bar{Y_n}$ is a strong Malcev basis of $\bar{\mf{n}}$ strongly based on $\bar{\Lambda}$ and $\bar{Y_i}=\sum_{j=2}^ia_{ij}\bar{X_j}$, where $a_{ii}$ is Diophantine. Therefore $\bar{\Lambda}$ is Diophantine with respect to $\bar{\Gamma}$. 

Since $\bar{\pi}$ is $\Lambda$-equivariant, the action $\rho$ of $\Lambda$ when restricted to $e^{\bb{Z}Y_1}$, preserves fibers of $\bar{\pi}$. 

Let $z\in\gn$. Choose a point $\Gamma x$ in $\pz$. Then we have a trivialization 
\begin{equation*}
\iota_{\Gamma x}\colon\bb{Z}\backslash\bb{R}\simeq\pz
\end{equation*}
of $\pz$ given by $\iota_{\Gamma x}(s)=\Gamma e^{sX_1}x$. Note that if we take another point $\Gamma y\in\pz$, $\iota_{\Gamma y}^{-1}\circ\iota_{\Gamma x}\colon\bb{Z}\backslash\bb{R}\to\bb{Z}\backslash\bb{R}$ is a rotation. 

Let $Y_1=aX_1$, where $a$ is Diophantine. If we identify $\pz$ with $\bb{Z}\backslash\bb{R}$ by $\iota_{\Gamma x}$, then the action of $e^{Y_1}$ on $\bb{Z}\backslash\bb{R}$ is $s\mapsto s+a$. 

Let $\mu_z$ be the normalized Haar measure naturally defined on $\bar{\pi}^{-1}(z)$, $\mu$ the $N$-invariant probability measure on $\GN$ and $\nu$ the $\bar{N}$-invariant probability measure on $\gn$. For any $f\in C(\GN)$, 
\begin{equation}\label{Weil}
\int_\GN fd\mu=\int_\gn\int_{\bar{\pi}^{-1}(z)}fd\mu_zd\nu. 
\end{equation}

\begin{lem}
The action $\rho$ is ergodic with respect to $\mu$.
\end{lem}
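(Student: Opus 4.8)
The plan is to prove ergodicity by induction on $\dim N$, peeling off one central direction at a time via the circle bundle $\bar\pi:\GN\to\gn$ already constructed. The crucial observation is that, since $X_1$ is central, right translation by $e^{sX_1}$ defines a $\mu$-preserving circle action on $\GN$ whose orbits are exactly the fibers $\pz$, and this circle action commutes with $\rho$: for $\lambda\in\Lambda$ one has $e^{sX_1}\lambda=\lambda e^{sX_1}$, so $\rho(\cdot,\lambda)$ maps fibers to fibers $\Lambda$-equivariantly and commutes with the rotations. Hence $\rho$ respects the isotypic decomposition of $L^2(\GN,\mu)$ under this circle action. I would set up the induction with base case $N=\er\cong\bb{R}$, where $\GN\cong\bb{Z}\backslash\bb{R}$ and $\rho$ is generated by the rotation $s\mapsto s+a$ with $a$ Diophantine, hence irrational, so that ergodicity is the classical statement for an irrational rotation.

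For the inductive step I would decompose $L^2(\GN,\mu)=\bigoplus_{k\in\bb{Z}}H_k$, where $H_k$ consists of those $f$ satisfying $f(\Gamma x e^{sX_1})=e^{2\pi iks}f(\Gamma x)$ for all $s$; that this is an orthogonal Hilbert-space decomposition follows from the Peter--Weyl theorem for the circle action together with the disintegration \eqref{Weil} of $\mu$ over the fibers against their normalized Haar measures $\mu_z$. Because $\rho$ commutes with the circle action, each $H_k$ is $\Lambda$-invariant, so it suffices to show that a $\rho$-invariant $f\in H_k$ vanishes unless $k=0$. For $k\neq 0$ I would apply invariance under the single element $e^{Y_1}=e^{aX_1}\in\Lambda$: since this element acts on each fiber by the rotation $s\mapsto s+a$, invariance forces $e^{2\pi ika}f=f$, and $a$ being Diophantine (in particular irrational) gives $f=0$. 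For $k=0$ the space $H_0$ consists of the functions constant along the fibers, i.e.\ pullbacks $f=\bar f\circ\bar\pi$ with $\bar f\in L^2(\gn,\nu)$; since $\bar\pi$ is $\Lambda$-equivariant, a $\rho$-invariant such $f$ descends to a $\bar\Lambda$-invariant function on $\gn$, and as $\bar\Lambda$ was shown to be Diophantine with respect to $\bar\Gamma$, the inductive hypothesis forces $\bar f$ to be constant. Combining the two cases, every $\rho$-invariant $f$ is constant, which is the desired ergodicity.

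The genuinely structural input is the reduction to $\bar N$: I must check that the induced action of $\bar\Lambda$ on $\gn$ is again right multiplication by a Diophantine lattice, so that the inductive hypothesis applies verbatim --- this is precisely the content of the discussion preceding the lemma, where $\bar Y_i=\sum_{j=2}^i a_{ij}\bar X_j$ with each $a_{ii}$ Diophantine. I expect the main technical obstacle to be the measure-theoretic bookkeeping: justifying the orthogonal decomposition into the $H_k$ and the identification of $H_0$ with $L^2(\gn,\nu)$ rigorously from \eqref{Weil}, and confirming that fiberwise invariance assembles into genuine invariance of an $L^2$-function. None of this is deep, but it is where care is needed; the dynamical heart of the argument --- that the components with $k\neq 0$ are annihilated by the irrational rotation $e^{Y_1}$ while the fiber-constant component is governed by the lower-dimensional ergodicity --- is immediate once the circle action is seen to commute with $\rho$.
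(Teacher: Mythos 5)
Your proof is correct and follows essentially the same route as the paper: induction on $\dim N$ via the circle bundle $\bar{\pi}:\GN\to\gn$, with the irrational (indeed Diophantine) rotation by $a$ disposing of the fiber direction and the inductive hypothesis — applicable because $\bar{\Lambda}$ is Diophantine with respect to $\bar{\Gamma}$ — handling the base. The only difference is one of bookkeeping: where you kill the nonzero modes of the global isotypic decomposition $L^2(\GN,\mu)=\bigoplus_{k}H_k$ using invariance under $e^{Y_1}$, the paper argues fiberwise, observing that a $\Lambda$-invariant function is invariant under the ergodic rotation on almost every fiber $\pz$, hence constant there, and so descends to a $\bar{\Lambda}$-invariant function on $\gn$.
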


\begin{proof}
We use induction on $n$. For $n=1$, the action $\rho$ is given by an irrational rotation on $\bb{Z}\backslash\bb{R}$, hence the result is well known. In general, Let $f\colon\GN\to\bb{C}$ be a $\Lambda$-invariant $L^2$-function with $\int_\GN fd\mu=0$. Since the action of $e^{\bb{Z}Y_1}$ on $\pz$ is ergodic, $f|_{\pz}$ is constant $\mu_z$-almost everywhere. We denote this constant by $g(z)$. Then $g\colon\gn\to\bb{C}$ is $\bar{\Lambda}$-invariant measurable function. By induction, $g$ is constant $\nu$-almost everywhere. By Equation \eqref{Weil}, this constant must be zero. Therefore $f$ is zero $\mu$-almost everywhere. 
\end{proof}

Let $c\colon\GN\times\Lambda\to\bb{R}$ be a $C^\infty$-cocycle over $\rho$. We must show that $c$ is cohomologous to a constant cocycle $c_0\colon\Lambda\to\bb{R}$, where $c_0(\lambda):=\int_\GN c(x,\lambda)d\mu(x)$. Therefore we may assume that $\int_\GN c(x,\lambda)d\mu(x)=0$ for all $\lambda\in\Lambda$, and we will show that $c$ is a coboundary. We prove this by induction on $n$. When $n=1$, the action $\rho$ is given by a Diophantine rotation on $\bb{Z}\backslash\bb{R}$, hence the result is well known. 

\begin{lem}\label{int0}
For all $m\in\bb{Z}$, 
\begin{equation*}
\int_{\pz}c\left(s,e^{mY_1}\right)d\mu_z(s)=0. 
\end{equation*}
\end{lem}

\begin{proof}
Fix $m$ and put $g(z)=\int_{\pz}c\left(s,e^{mY_1}\right)d\mu_z(s)$. For any $\lambda\in\Lambda$, the cocycle equation gives 
\begin{equation*}
c(x,\lambda)+c\left(x\lambda,e^{mY_1}\right)=c\left(x,e^{mY_1}\right)+c\left(xe^{mY_1},\lambda\right). 
\end{equation*}
By integrating this equation on $\pz$, we get $g\left(z\pi(\lambda)\right)=g(z)$. Since the action of $\bar{\Lambda}$ on $\gn$ is ergodic, $g$ is constant. By Equation \eqref{Weil}, $g$ must be zero. 
\end{proof}

Let $f\colon\bb{Z}\backslash\bb{R}\xrightarrow{\iota_{\Gamma x}}\pz\xrightarrow{c(\ \cdot\ ,e^{Y_1})}\bb{R}$. We define $h_z\colon\pz\to\bb{R}$ by 
\begin{equation*}
h_z\left(\iota_{\Gamma x}(s)\right)=\sum_{k\in\bb{Z}\setminus\{0\}}\frac{\hat{f}(k)}{-1+e^{2\pi ika}}e^{2\pi iks}. 
\end{equation*}
Then $h_z\colon\pz\to\bb{R}$ is $C^\infty$, since $f$ is $C^\infty$ and $a$ is Diophantine. By Lemma \ref{int0}, we have 
\begin{equation*}
c\left(\iota_{\Gamma x}(s),e^{Y_1}\right)=-h_z\left(\iota_{\Gamma x}(s)\right)+h_z\left(\iota_{\Gamma x}e^{Y_1}\right). 
\end{equation*}
If we choose another point $\Gamma e^{s_0X_1}x\in\pz$ to define $h_z$, 
\begin{align*}
h_z\left(\iota_{\Gamma x}(s)\right)&=h_z\left(\Gamma e^{sX_1}x\right)=h_z\left(\iota_{\Gamma e^{s_0X_1}x}(s-s_0)\right)\\
&=\sum_{k\in\bb{Z}\setminus\{0\}}\frac{1}{-1+e^{2\pi ika}}\int_0^1c\left(\Gamma e^{(u+s_0)X_1}x,e^{Y_1}\right)e^{-2\pi iku}du\ e^{2\pi ik(s-s_0)}\\
&=\sum_{k\in\bb{Z}\setminus\{0\}}\frac{1}{-1+e^{2\pi ika}}\int_0^1f(u+s_0)e^{-2\pi iku}du\ e^{2\pi ik(s-s_0)}\\
&=\sum_{k\in\bb{Z}\setminus\{0\}}\frac{\hat{f}(k)}{-1+e^{2\pi ika}}e^{2\pi iks}, 
\end{align*}
so that $h_z$ is determined only by $z$. Define $h\colon\GN\to\bb{R}$ by $h|_{\pz}=h_z$. Then for all $x\in\GN$ and $m\in\bb{Z}$, we have $c\left(x,e^{mY_1}\right)=-h(x)+h\left(xe^{mY_1}\right)$. 

Let $U\subset\gn$ be an open set and $\sigma\colon U\to\bar{\pi}^{-1}(U)$ a section of $\bar{\pi}$. Then we have a trivialization $\bb{Z}\backslash\bb{R}\times U\simeq\bar{\pi}^{-1}(U)$ which sends $(s,z)$ to $\iota_{\sigma(z)}(s)=\Gamma e^{sX_1}\sigma(z)$. Hence 
\begin{equation*}
h\left(\iota_{\sigma(z)}(s)\right)=\sum_{k\in\bb{Z}\setminus\{0\}}\frac{1}{-1+e^{2\pi ika}}\int_0^1c\left(\iota_{\sigma(z)}(u),e^{Y_1}\right)e^{-2\pi iku}du\ e^{2\pi iks}
\end{equation*}
on $\bar{\pi}^{-1}(U)$. The following lemma shows $h$ is $C^\infty$ on $\GN$. 

\begin{lem}
Let $U\subset\bb{R}^n$ be an open set and $f\colon\bb{Z}\backslash\bb{R}\times U\to\bb{R}$ be a $C^\infty$-function. Define $h\colon\bb{Z}\backslash\bb{R}\times U\to\bb{R}$ by 
\begin{equation*}
h(s,z)=\sum_{k\in\bb{Z}\setminus\{0\}}\frac{1}{-1+e^{2\pi ika}}\widehat{f_z}(k)e^{2\pi iks}, 
\end{equation*}
where $f_z(u)=f(u,z)$. Then $h$ is $C^\infty$. 
\end{lem}

\begin{proof}
Let $V\subset\bb{R}^n$ be an open set such that $\bar{V}\subset U$ and $\bar{V}$ is compact. We will show that $h$ is $C^\infty$ on $\bb{Z}\backslash\bb{R}\times V$. Choose constants $C$, $\alpha>0$ such that $\left|-1+e^{2\pi ika}\right|\geq C\lvert k\rvert^{-\alpha}$ for all $k\in\bb{Z}\setminus\{0\}$. 

We will first prove that $h$ is continuous. Since for any $m\in\bb{Z}_{>0}$, 
\begin{equation*}
\frac{\partial^mf_z}{\partial s^m}(s)=\sum_{k\in\bb{Z}}(2\pi ik)^m\widehat{f_z}(k)e^{2\pi iks}
\end{equation*}
in $L^2(\bb{Z}\backslash\bb{R})$, 
\begin{align*}
\left\lVert\frac{\partial^mf_z}{\partial s^m}\right\rVert_2^2&=\sum_{k\in\bb{Z}}\left|(2\pi ik)^m\widehat{f_z}(k)\right|^2\\
&\geq(2\pi)^{2m}\lvert k\rvert^{2m}\left|\widehat{f_z}(k)\right|^2\geq\lvert k\rvert^{2m}\left|\widehat{f_z}(k)\right|^2. 
\end{align*}
Since $\left\lVert\frac{\partial^mf_z}{\partial s^m}\right\rVert_2=\left(\int_0^1\left|\frac{\partial^m}{\partial s^m}f(s,z)\right|^2ds\right)^{\frac{1}{2}}$ is continuous in $z$, there exists $M>0$ such that $\left\lVert\frac{\partial^mf_z}{\partial s^m}\right\rVert_2<M$ for every $z\in\bar{V}$. Hence for all $k\in\bb{Z}$ and $z\in\bar{V}$, $\lvert k\rvert^m\left|\widehat{f_z}(k)\right|\leq M$. Therefore, for any $z\in\bar{V}$, 
\begin{align*}
\sum_{k\in\bb{Z}\setminus\{0\}}\left|\frac{1}{-1+e^{2\pi ika}}\widehat{f_z}(k)e^{2\pi iks}\right|&\leq C^{-1}\sum_{k\in\bb{Z}\setminus\{0\}}\frac{1}{\lvert k\rvert^2}\lvert k\rvert^{\alpha+2}\left|\widehat{f_z}(k)\right|\\
&\leq C^{-1}M\sum_{k\in\bb{Z}\setminus\{0\}}\frac{1}{\lvert k\rvert^2}<\infty. 
\end{align*}
This implies the continuity of $h$ on $\bb{Z}\backslash\bb{R}\times\bar{V}$. 

We have 
\begin{equation*}
\frac{\partial h}{\partial s}(s,z)=\sum_{k\in\bb{Z}\setminus\{0\}}\frac{2\pi ik}{-1+e^{2\pi ika}}\widehat{f_z}(k)e^{2\pi iks}. 
\end{equation*}
Thus a similar argument shows that $\frac{\partial h}{\partial s}$ is continuous. 

Let $z=(z_1,\dots,z_n)$. For any $z\in\bar{V}$, 
\begin{align*}
\left|\frac{\partial}{\partial z_j}\left(\frac{1}{-1+e^{2\pi ika}}\widehat{f_z}(k)e^{2\pi iks}\right)\right|&=\left|\frac{1}{-1+e^{2\pi ika}}\widehat{\frac{\partial f}{\partial z_j}(\ \cdot\ ,z)}(k)e^{2\pi iks}\right|\\
&\leq C^{-1}\frac{1}{\lvert k\rvert^2}\lvert k\rvert^{\alpha+2}\left|\widehat{\frac{\partial f}{\partial z_j}(\ \cdot\ ,z)}(k)\right|\\
&\leq C^{-1}M^\prime\frac{1}{\lvert k\rvert^2}\ \in L^1\left(\bb{Z}\setminus\{0\}\right). 
\end{align*}
Thus 
\begin{equation*}
\frac{\partial h}{\partial z_j}(s,z)=\sum_{k\in\bb{Z}\setminus\{0\}}\frac{1}{-1+e^{2\pi ika}}\widehat{\frac{\partial f}{\partial z_j}(\ \cdot\ ,z)}(k)e^{2\pi iks}. 
\end{equation*}
Hence $\frac{\partial h}{\partial z_j}$ is continuous by an argument similar to those above. For higher derivatives of $h$, repeat this procedure. 
\end{proof}

Set 
\begin{equation*}
c_1(x,\lambda)=c(x,\lambda)+h(x)-h(x\lambda). 
\end{equation*}
The map $c_1\colon\GN\times\Lambda\to\bb{R}$ is a $C^\infty$-cocycle and $c_1\left(x,e^{mY_1}\right)=0$. Thus for any $\lambda\in\Lambda$, the cocycle equation implies $c_1(x,\lambda)=c_1\left(xe^{Y_1},\lambda\right)$. Since the action of $e^{\bb{Z}Y_1}$ on $\pz$ is ergodic, $c_1(x,\lambda)$ is constant on $\pz$. Therefore we can define a cocycle $\bar{c}\colon\gn\times\bar{\Lambda}\to\bb{R}$ by $\bar{c}\left(\bar{\pi}(x),\pi(\lambda)\right)=c_1(x,\lambda)$. Indeed, if $\bar{\pi}(x)=\bar{\pi}(y)$ and $\pi(\lambda)=\pi(\lambda^\prime)$, then there exists $m\in\bb{Z}$ with $\lambda=e^{mY_1}\lambda^\prime$, so that 
\begin{equation*}
c_1(x,\lambda)=c_1\left(x,e^{mY_1}\lambda^\prime\right)
=c_1\left(xe^{mY_1},\lambda^\prime\right)=c_1\left(y,\lambda^\prime\right). 
\end{equation*}
Furthermore, 
\begin{align*}
\int_\gn\bar{c}\left(x,\pi(\lambda)\right)d\nu(z)
&=\int_\gn\int_{\pz}c_1(s,\lambda)d\mu_z(s)d\nu(z) \\
&=\int_\GN c_1(x,\lambda)d\mu(x)=0. 
\end{align*}
By induction, there exists a $C^\infty$-function $P\colon\gn\to\bb{R}$ such that $\bar{c}\left(z,\pi(\lambda)\right)=-P(z)+P\left(z\pi(\lambda)\right)$. Put $Q=P\circ\bar{\pi}$. Then 
\begin{equation*}
c_1(x,\lambda)=\bar{c}\left(\bar{\pi}(x),\pi(\lambda)\right)=-Q(x)+Q(x\lambda). 
\end{equation*}
This proves Theorem \ref{constant}. 
\end{proof}

\begin{proof}[Proof of Theorem \ref{main2}]
Let $\tilde{\rho}\colon M\times N\to M$ be the suspension of $\rho\colon\GN\times\Lambda\to\GN$, where $M=\GN\times_{\Lambda}N$ is a compact manifold. Then $\tilde{\rho}$ is locally free. Let $\mca{F}$ be the orbit foliation of $\tilde{\rho}$. We have 
\begin{equation*}
H^1(\mca{F})\simeq H^1\left(\Lambda;C^\infty(\GN)\right)
\end{equation*}
by Pereira--dos Santos \cite{PdS}, where the right hand side is the first cohomology of the $\Lambda$-module $C^\infty(\GN)$ obtained by $\rho$. It is easy to prove that $\Hom(\Lambda,\bb{R})\to H^1\left(\Lambda;C^\infty(\GN)\right)$ is injective. By Theorem \ref{constant}, 
\begin{equation*}
H^1\left(\Lambda;C^\infty(\GN)\right)=\Hom(\Lambda,\bb{R}). 
\end{equation*}

\begin{lem}
\begin{equation*}
\dim\Hom(\Lambda,\bb{R})=\dim H^1(\mf{n}). 
\end{equation*}
\end{lem}

\begin{proof}
Recall that $[N,N]\backslash\Lambda[N,N]$ is a cocompact lattice in $[N,N]\backslash N$ and that $[\Lambda,\Lambda]\backslash\left(\Lambda\cap[N,N]\right)$ is finite. Since 
\begin{equation*}
0\to[\Lambda,\Lambda]\backslash\left(\Lambda\cap[N,N]\right)\to[\Lambda,\Lambda]\backslash\Lambda\to[N,N]\backslash\Lambda[N,N]\to0
\end{equation*}
is exact, we have 
\begin{equation*}
\rank\left([\Lambda,\Lambda]\backslash\Lambda\right)=\rank\left([N,N]\backslash\Lambda[N,N]\right)=\dim\left([N,N]\backslash N\right). 
\end{equation*}
Thus 
\begin{align*}
\dim\Hom(\Lambda,\bb{R})&=\dim\Hom\left([\Lambda,\Lambda]\backslash\Lambda,\bb{R}\right)\\
&=\rank\left([\Lambda,\Lambda]\backslash\Lambda\right)\\
&=\dim\left([N,N]\backslash N\right)\\
&=\dim\Hom_{\bb{R}}\left([\mf{n},\mf{n}]\backslash\mf{n},\bb{R}\right)\\
&=\dim H^1(\mf{n}). 
\end{align*}
\end{proof}

Therefore we obtain 
\begin{equation*}
H^1(\mca{F})=H^1(\mf{n}). 
\end{equation*}
This proves Theorem \ref{main2}. 
\end{proof}

\section{Existence of Diophantine lattices}
Let $N$ be a simply connected nilpotent Lie group with Lie algebra $\mf{n}$ and $\mf{n}_{\bb{Q}}$ a rational structure of $\mf{n}$. We will construct a Diophantine lattice when $\mf{n}_{\bb{Q}}$ admits a graduation. Namely, we assume that $\mf{n}_{\bb{Q}}$ has a sequence $V_i$ of $\bb{Q}$-subspaces such that $\mf{n}_{\bb{Q}}=\bigoplus_{i=1}^kV_i$ and $[V_i,V_j]\subset V_{i+j}$. Let $X_1,\dots,X_n$ be a $\bb{Q}$-basis of $\mf{n}_{\bb{Q}}$ such that 
\begin{equation*}
X_1,\dots,X_{i_1}\in V_k,\quad X_{i_1+1},\dots,X_{i_2}\in V_{k-1},\dots,X_{i_{k-1}+1},\dots,X_n\in V_1. 
\end{equation*}
Then $X_1,\dots,X_n$ is a strong Malcev basis of $\mf{n}$ with rational structure constants. Multiplying $X_1,\dots,X_n$ by an integer if necessary, we may assume that $\Gamma:=e^{\bb{Z}X_1}\cdots e^{\bb{Z}X_n}$ is a cocompact lattice in $N$. Let $\alpha$ be a root of an irreducible polynomial of degree $k+1$ over $\bb{Q}$. Since $\alpha,\alpha^2,\dots,\alpha^k$ are irrational algebraic numbers, they are Diophantine. If we define a linear map $\varphi\colon\mf{n}\to\mf{n}$ by $\varphi(X)=\alpha^iX$ for $X\in V_i\otimes\bb{R}$, then $\varphi$ is an automorphism of the Lie algebra $\mf{n}$. Put $Y_i=\varphi(X_i)$. Then $Y_1,\dots,Y_n$ is a strong Malcev basis of $\mf{n}$ strongly based on $\Lambda:=e^{\bb{Z}Y_1}\cdots e^{\bb{Z}Y_n}$. Thus $\Lambda$ is Diophantine with respect to $\Gamma$. 

\section*{Acknowledgement}
The author would like to thank Masayuki Asaoka for helpful comments.


\begin{thebibliography}{99}
\bibitem{A}M. Asaoka. 
{\em Deformation of locally free actions and leafwise cohomology}. 
In Foliations: dynamics, geometry and topology, Adv. Courses Math. CRM Barcelona. Birkh\"auser/Springer, Basel, 2014, pp. 1--40. 
\bibitem{CG}L. Corwin and F. P. Greenleaf. 
{\em Representations of nilpotent Lie groups and their applications. Part 1:Basic theory and examples}. 
Cambridge studies in advanced mathematics, vol. 18, Cambridge University Press, Cambridge, 1990. 
\bibitem{KS}A. Katok and R. J. Spatzier. 
{\em First cohomology of Anosov actions of higher rank abelian groups and applications to rigidity}. 
Inst. Hautes \'Etudes Sci. Publ. Math. \textbf{79}(1994), 131--156. 
\bibitem{MM}S. Matsumoto and Y. Mitsumatsu. 
{\em Leafwise cohomology and rigidity of certain Lie group actions}. 
Ergod. Th. \& Dynam. Sys. \textbf{23}(2003), 1839--1866. 
\bibitem{Mi}D. Mieczkowski. 
{\em The first cohomology of parabolic actions for some higher-rank abelian groups and representation theory}. 
J. Mod. Dyn. \textbf{1}(2007), 61--92. 
\bibitem{PdS}M. S. Pereira and N. M. dos Santos.
{\em On the cohomology of foliated bundles}.
Proyecciones \textbf{21}(2)(2002), 175--197.
\bibitem{R}F. A. Ram\'{i}rez. 
{\em Cocycles over higher-rank abelian actions on quotients of semisimple Lie groups}. 
J. Mod. Dyn. \textbf{3}(2009), 335--357. 
\bibitem{dS}N. M. dos Santos. 
{\em Parameter rigid actions of the Heisenberg groups}. 
Ergod. Th. \& Dynam. Sys. \textbf{27}(2007), 1719--1735. 
\end{thebibliography}
\end{document}